\numberwithin{equation}{section}
\newtheorem{theorem}{Theorem}[section]
\newtheorem{proposition}[theorem]{Proposition}
\newtheorem{lemma}[theorem]{Lemma}
\theoremstyle{definition}
\newtheorem{definition}[theorem]{Definition}
\newtheorem{example}[theorem]{Example}
\newtheorem{remark}[theorem]{Remark}
\newcommand\<{\langle}
\newcommand\CC{{\mathbb C}}
\newcommand\RR{{\mathbb R}}
\newcommand\PP{{\mathbb P}}
\def\<{\langle}
\def\>{\rangle}
\def\0{\mathbf{0}}
\def\ka{\kappa}
\def\la{\lambda}
\newcommand\ZZ{{\mathbb Z}}
\newcommand\sA{{\mathscr A}}
\newcommand{\Res}{\operatorname{Res}}
\newcommand{\mRes}{\operatorname{mRes}}
\newcommand{\Span}{\operatorname{span}}
\newcommand{\Hom}{\operatorname{Hom}}
\newcommand\conv{{\rm conv}}
\newcommand\vol{{\rm vol}}
\newcommand\mvol{{\rm mvol}}
\newcommand\minus{\smallsetminus}
\renewcommand\>{\rangle}
\pgfplotsset{compat=1.12}
\begin{document}

\title{Sparse versions of the Cayley--Bacharach Theorem}

\author{Laura Felicia Matusevich}
\address{Department of Mathematics \\
Texas A\&M University \\ College Station, TX 77843.}
\email{laura@math.tamu.edu}
\thanks{LFM was partially supported by NSF Grant DMS--1500832}

\author{Bruce Reznick}
\address{Department of Mathematics \\
University of Illinois at Urbana-Champaign \\ Urbana, IL 61801.}
\email{reznick@math.uiuc.edu}
\thanks{BR was partially supported by Simons Collaboration Grant 280987}

\subjclass[2010]{Primary: 51N35, 52B20. Secondary: 14N10, 13P15}


\begin{abstract}
We give combinatorial generalizations of the Cayley--Bacharach
theorem and induced map.
\end{abstract}
\maketitle

\setcounter{tocdepth}{2}
\parskip=1ex
\parindent0pt

\section{Introduction}
\label{sec:intro}

The \emph{Cayley--Bacharach theorem} states that, given two
cubic curves in the projective plane that meet in nine points, any other
cubic that passes through eight of the points, contains also the
ninth. As with many attributions in Mathematics, it is known that
the Cayley--Bacharach theorem is originally due to Chasles;
the article~\cite{cayley-bacharach} contains a thorough historical account
of this result, including the roles of Cayley and Bacharach, as well
as many geometric generalizations. 

The Cayley--Bacharach theorem provides a map assigning a point in the
plane to eight given points. More precisely, 
given eight points in the affine plane $\CC^2$, the set of cubic polynomials in two variables that
vanish at these points is a two dimensional
vector space, at least if no three of the points are
on a line, and no six lie on a conic. If $F(x,y)$, $G(x,y)$ form a 
basis of this vector space, B\'ezout's Theorem implies that $F$ and
$G$ have nine common zeros (in the projective plane)
counting multiplicity. For most choices of eight points, all the
multiplicities equal $1$, so a ninth point is determined. Again, for
most choices of eight points, this ninth point lies in the affine plane.
This is a natural map from a dense subset of $(\CC^2)^8$ to $\CC^2$.
We call this the \emph{extra point map}, $\Upsilon$.

The map $\Upsilon$ is rational, and explicit formulas for it can be found in
the article~\cite{cbFormulas}. We give a different proof of rationality, as well as
alternative formulas, in Section~\ref{sec:CayleyBacharach}. While our
formulas are less beautiful and more complicated than those
in~\cite{cbFormulas}, and no one in their right mind would use them in
a practical context, they do have one very positive attribute: they
can be naturally generalized.

The generalizations we are interested in are of the following form
\begin{quote}
  A choice of $N$ generic points in $\CC^d$ gives rise to $d$ hypersurfaces satisfying
  given constraints, and it turns out that those hypersurfaces meet in
  exactly $N+1$ points.
\end{quote}
The constraints we consider are combinatorial in nature: we fix the
monomials that appear in the defining equations of the corresponding
hypersurfaces. These support sets must be carefully chosen; in Section~\ref{sec:generalization}, we call
them \emph{Chasles configurations} and \emph{Chasles structures}.
While Chasles' work has received
significant recognition (his name is on the Eiffel tower), we thought
it appropriate to name our generalizations in his honor.
 
Our main result, Theorem~\ref{thm:rationalMap}, states that the extra
point map arising in
this more general situation is still rational. The proof is
essentially the same as our proof that $\Upsilon$ is rational
(see Theorem~\ref{thm:octamapIsRational}), and consequently also produces explicit formulas. 

The key ingredient we use to compute $\Upsilon$ and its generalizations
is the notion of~\emph{resultant}. The well-known resultant of two
univariate polynomials $f$ and $g$ is a polynomial in the coefficients
of $f$ and $g$ that vanishes precisely when $f$ and $g$ have a common
factor. Resultants are a very important tool when solving polynomial
equations, due to their fundamental
role in elimination theory. This has spurred much interest in
resultants, and especially in explicit formulas for resultants. We make 
use of the following fact (that is made precise in Theorem~\ref{thm:ProductOfRoots}):
\begin{quote}
The product of the coordinates of the roots of a system of polynomial equations is a
rational function of the coefficients of the system, that can be expressed in terms of resultants.
\end{quote}
This result has been known since the late 1990s; 
see~\cite{khovanskii,cds,rojas} and
also~\cite{pedersen-sturmfels}. (In this article, we use the version from~\cite{dandrea-sombra}.)
Its relevance is that
it allows us to express the coordinates of the point we are interested
in, in terms of the coordinates of the points we are given and the
coefficients of the polynomials that define the hypersurfaces
containing those points. Those coefficients can also be expressed as
rational functions of the coordinates of the given points, since we
have fixed the monomials that appear in those polynomials.

\subsection*{Outline}

In Section~\ref{sec:CayleyBacharach}, we prove that $\Upsilon$ is
rational by giving an explicit formula in terms of Sylvester
resultants. Section~\ref{sec:Hilbert} explains our motivation for
considering $\Upsilon$ in the first place. Section~\ref{sec:theory}
is a technical section containing results needed to generalize the Cayley--Bacharach
theorem. The paper becomes readable again in
Section~\ref{sec:generalization}, where we introduce our
generalizations, and
Section~\ref{sec:examples} contains infinitely many examples.

\subsection*{Acknowledgments}

This project has benefited from visits by the authors to each other's 
institutions. We also discussed this material during the joint meeting
of the AMS and the RSME in Seville in the Summer of 2003. 
We are very grateful to Bernd Sturmfels,
for his thoughtful advice, and for directing us to the Cayley octads
example in Subsection~\ref{sec:cayley}.
Thanks also to Frank Sottile for productive conversations.

\section{The extra point map is rational}
\label{sec:CayleyBacharach}

We start by showing that the map arising from the Cayley--Bacharach
theorem is rational.

\begin{theorem}
\label{thm:octamapIsRational}
Let $\Upsilon$ be the map that assigns, to eight generic points in the
plane, a ninth point determined by the Cayley--Bacharach theorem.
The map $\Upsilon$ is rational.
\end{theorem}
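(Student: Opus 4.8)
The plan is to give an explicit rational formula for $\Upsilon$ in terms of one-variable resultants, following the strategy outlined in the introduction. Start with eight generic points $p_1,\dots,p_8 \in \CC^2$. The space of cubics vanishing at all eight is two-dimensional; pick a basis $F(x,y), G(x,y)$. The coefficients of $F$ and $G$ are obtained by solving a linear system whose entries are monomials evaluated at the $p_i$, so each coefficient is a rational function of the coordinates of the $p_i$ (by genericity, the relevant $8\times 10$ matrix has rank $8$ and we can solve for a basis by Cramer's rule, clearing an appropriate nonvanishing minor). The nine common zeros of $F$ and $G$ are, for generic input, distinct, and eight of them are $p_1,\dots,p_8$; call the ninth $q = (q_1, q_2) = \Upsilon(p_1,\dots,p_8)$.

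Next I would extract $q_1$ and $q_2$ individually using resultants. Consider $R_y(x) := \Res_y(F, G)$, the Sylvester resultant of $F$ and $G$ viewed as polynomials in $y$ with coefficients in $\CC[x]$. This is a polynomial in $x$ alone whose roots are exactly the $x$-coordinates of the common zeros of $F$ and $G$; thus, counting multiplicity, $R_y(x) = c \prod_{i=1}^{8}(x - (p_i)_1) \cdot (x - q_1)$ for a constant $c$ that is itself a polynomial (rational function) in the coefficients of $F, G$. Comparing, say, the leading coefficient and the next coefficient of $R_y$ — equivalently, using that the sum of the roots of $R_y$ is a rational function of its coefficients — lets me solve for $q_1$ as
\[
q_1 \;=\; \Big(\,\text{(sum of all roots of } R_y) \;-\; \sum_{i=1}^{8} (p_i)_1\Big),
\]
and the sum of the roots of $R_y$ is $-[\text{second coefficient}]/[\text{leading coefficient}]$, a rational function of the coefficients of $F$ and $G$, hence of the coordinates of the $p_i$. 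Symmetrically, $\Res_x(F,G)$ yields $q_2$. Composing these rational maps gives $\Upsilon$ as a rational map $(\CC^2)^8 \dashrightarrow \CC^2$, as desired.

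The main obstacle is bookkeeping about degeneracies rather than any deep idea: I must check that for generic $(p_1,\dots,p_8)$ (i) the basis $F, G$ exists and its coefficients are rational in the input (no three points collinear, no six on a conic — a nonempty Zariski-open condition), (ii) the nine intersection points are simple and all affine, so that the resultants factor as claimed with no extra factor of $x$ coming from a point at infinity and no repeated roots, and (iii) the leading coefficients of $R_y(x)$ and $R_x(y)$ do not vanish identically, so the "sum of roots" formula is legitimate. Each of these is an open dense condition, and since $\Upsilon$ is already known to be defined on a dense set, it suffices to exhibit a single configuration where everything is nondegenerate; then the formula derived above, being a ratio of polynomials in the $p_i$, agrees with $\Upsilon$ wherever both are defined. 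I would close by remarking that this argument produces honest formulas and is the prototype for the general Chasles-configuration statement (Theorem~\ref{thm:rationalMap}), where the Sylvester resultant is replaced by the sparse/toric resultant machinery of Theorem~\ref{thm:ProductOfRoots}.
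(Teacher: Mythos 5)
Your proposal is correct and follows essentially the same route as the paper: Cramer's rule makes the coefficients of $F$ and $G$ rational in the coordinates of the eight points, and a Sylvester resultant in each variable reduces the ninth coordinate to a symmetric function of nine roots, eight of which are known. The only difference is that you recover the ninth root from the \emph{sum} of the roots (ratio of the top two coefficients of the resultant), whereas the paper uses the \emph{product} (the constant term divided by $a_1\cdots a_8$); either works here, but the product is the version that generalizes, since Theorem~\ref{thm:ProductOfRoots} yields products, not sums, of coordinates.
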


\begin{proof}
We show that $\Upsilon$ is rational by giving an explicit formula. 
We denote the eight given points by $\rho_i=(a_i,b_i) \in
(\CC^*)^2, 1 \leq i \leq 8$. By the genericity assumption, we may assume that two linearly
independent cubic polynomials vanishing on $\rho_1,\dots,\rho_8$ are
of the form
\[
F(x,y) = x^3 + \ka_{21} x^2y +  \ka_{12} xy^2 + \ka_{20}x^2+\ka_{02}y^2+
               \ka_{11}xy +\ka_{10}x + \ka_{01}y +\ka_{00} 
\]
\[
G(x,y) = y^3 + \la_{21} x^2y +\la_{12} xy^2 + \la_{20}x^2+\la_{02}y^2+
               \la_{11}xy +\la_{10}x + \la_{01}y +\la_{00} 
\]
Let $M$ be the matrix whose rows are 
$[a_i^2b_i,a_ib_i^2,a_i^2,b_i^2,a_ib_i,a_i,b_i,1]$ for
$i=1,\dots,8$. Then 
$[\kappa_{21},\kappa_{12},\kappa_{20},\kappa_{02},\kappa_{11},\kappa_{10}\kappa_{01},\kappa_{00}]^t$
is a solution of 
\[
M v = [-a_1^3,\dots,-a_8^3]^t,
\] 
and 
$[\lambda_{21},\lambda_{12},\lambda_{20},\lambda_{02},\lambda_{11},\lambda_{10}\lambda_{01},\lambda_{00}]^t$
is a solution of 
\[
M v = [-b_1^3,\dots,-b_8^3]^t.
\] 
Again, as
$\rho_1,\dots,\rho_8$ are generic, we may assume that $\det(M)\neq 0$,
and consequently we may explicitly write the coefficients $\kappa_{ij}$ and
$\lambda_{ij}$ as ratios of polynomials in $a_i,b_j$ using Cramer's
rule. For instance
\[
\kappa_{00} = \frac{ \det 
\begin{bmatrix} 
a_1^2b_1 & a_1b_1^2 & a_1^2 & b_1^2 & a_1b_1 & a_1 & b_1 & -a_1^3 \\
a_2^2b_2 & a_2b_2^2 & a_2^2 & b_2^2 & a_2b_2 & a_2 & b_2 & -a_2^3 \\
a_3^2b_3 & a_3b_3^2 & a_3^2 & b_3^2 & a_3b_3 & a_3 & b_3 & -a_3^3 \\
a_4^2b_4 & a_4b_4^2 & a_4^2 & b_4^2 & a_4b_4 & a_4 & b_4 & -a_4^3 \\
a_5^2b_5 & a_5b_5^2 & a_5^2 & b_5^2 & a_5b_5 & a_5 & b_5 & -a_5^3 \\
a_6^2b_6 & a_6b_6^2 & a_6^2 & b_6^2 & a_6b_6 & a_6 & b_6 & -a_6^3 \\
a_7^2b_7 & a_7b_7^2 & a_7^2 & b_7^2 & a_7b_7 & a_7 & b_7 & -a_7^3 \\
a_8^2b_8 & a_8b_8^2 & a_8^2 & b_8^2 & a_8b_8 & a_8 & b_8 & -a_8^3 \\
\end{bmatrix} 
}{\det M}
\]

Now consider $F$ and $G$ as polynomials in the variable $x$, with
coefficients that are polynomials in $y$, and take the resultant to
eliminate the variable $x$. The roots of this resultant (as a
polynomial in $y$) are the $y$-coordinates of the nine solutions of
$F = G = 0$. Consequently, the coefficient of $y^0$ in this resultant
is the product of those nine $y$-coordinates. By taking resultant with
respect to $y$, we can also obtain the product of the nine
$x$-coordinates of the solutions of $F=G=0$.

The above calculation can be performed explicitly using a computer
algebra system. We used Macaulay2~\cite{M2} to compute the
coefficients we are interested in, which are given below.

The zeroth coefficient for the resultant of $F$ and $G$ with respect
to $y$ is:
\begin{align*}
R_x=&
\kappa_{02}^3\lambda_{00}^2
-\kappa_{02}^2\kappa_{01}\lambda_{01}\lambda_{00}
+\kappa_{02}\kappa_{01}^2\lambda_{02}\lambda_{00}
+\kappa_{02}^2\kappa_{00}\lambda_{01}^2
-2\kappa_{02}^2\kappa_{00}\lambda_{02}\lambda_{00}
-\kappa_{02}\kappa_{01}\kappa_{00}\lambda_{02}\lambda_{01} +\\
&
\kappa_{02}\kappa_{00}^2\lambda_{02}^2
-\kappa_{01}^3\lambda_{00}
+3\kappa_{02}\kappa_{01}\kappa_{00}\lambda_{00}
+\kappa_{01}^2\kappa_{00}\lambda_{01}
-2\kappa_{02}\kappa_{00}^2\lambda_{01}
-\kappa_{01}\kappa_{00}^2\lambda_{02}
+\kappa_{00}^3
\end{align*}
The zeroth coefficient for the resultant of $F$ and $G$ with respect
to $x$ (exchange $\lambda$'s and $\kappa$'s) is:
\begin{align*}
R_y= &
\kappa_{00}^2\lambda_{20}^3-\kappa_{10}\kappa_{00}\lambda_{20}^2\lambda_{10}+\kappa_{20}\kappa_{00}\lambda_{20}\lambda_{10}^2+
\kappa_{10}^2\lambda_{20}^2\lambda_{00}- 
2\kappa_{20}\kappa_{00}\lambda_{20}^2\lambda_{00}-\kappa_{20}\kappa_{10}\lambda_{20}\lambda_{10}\lambda_{00}+ \\
& \kappa_{20}^2\lambda_{20}\lambda_{00}^2-\kappa_{00}\lambda_{10}^3+3\kappa_{00}\lambda_{20}\lambda_{10}\lambda_{00}+ 
\kappa_{10}\lambda_{10}^2\lambda_{00}-2\kappa_{10}\lambda_{20}\lambda_{00}^2-\kappa_{20}\lambda_{10}\lambda_{00}^2+
\lambda_{00}^3
\end{align*}

Then our ninth point, $\rho=(a,b) = \Upsilon(\rho_1,\dots,\rho_8)$,
can be expressed as 
\[
a = \frac{R_x}{a_1\cdots a_8} , \qquad b = \frac{R_y}{b_1\cdots b_8}.
\]
\end{proof}

While the above formulas involve much division and high degree
polynomials, their most significant feature, as has been mentioned
before, is that they can be generalized. But this must wait until Section~\ref{sec:generalization}.

\section{Hilbert}
\label{sec:Hilbert}

This version of the Cayley-Bacharach Theorem was central to Hilbert's 1888 proof 
that there exist positive semidefinite (psd) ternary sextics which cannot be written as a sum of
squares (sos) of real ternary cubics (see~\cite{hilbert1888}). Hilbert starts with two real cubic polynomials
$F(x,y)$ and $G(x,y)$ which have nine common zeros -- $\{\rho_j \mid 1 \leq j \leq 9\}$, no three on a line, 
no six on a conic. He shows how to construct a sextic $H$ which is singular at 
$\{\rho_j \mid 1 \leq j \leq 8\}$ but so that $H(\rho_9) > 0$. By looking separately at the neighborhoods of the
$\rho_j$'s and their complement, Hilbert shows that there exists $\lambda > 0$ so that 
$f = F^2 + G^2 + \lambda H$ is psd; observe that $f(\rho_j) = 0$ for $1 \leq j \leq 8$ and $f(\rho_9) > 0$. 
Suppose now that $f = \sum g_k^2$ for cubics $g_k$. Then $g_k(\rho_j)
= 0$ for $1 \leq j \leq 8$, 
and Cayley-Bacharach implies that $g_k(\rho_9) = 0$ for all $g_k$, a
contradiction, which means that $f$ is not sos.
The condition on $F,G$ ensures that they (and $H$) cannot be particularly simple, and no explicit 
example was constructed in the  subsequent eighty years. 

In 1969, R. M. Robinson (see \cite{robinson}), showed that Hilbert's construction works with a 
simple pair $\{F,G\}$ which do not satisfy his restriction. He took $F(x,y) = x^3-x$
and $F(x,y) = y^3-y$, so that the $\rho_j$'s form the $3 \times 3$ grid: $\{-1,0,1\}^2$. He
then shows (in our notation) that $H(x,y) = (1-x^2)(1-y^2)(1-x^2-y^2)$ fulfills the conditions
of Hilbert's construction and that one may even take $\lambda = 1$. The resulting polynomial 
homo\-ge\-ni\-zes to an even symmetric ternary sextic form:
\begin{align*}
R(x,y,z) :&= (x^3-xz^2)^2 + (y^3-yz^2)^2 + (x^2-z^2)(y^2-z^2)(z^2-x^2-y^2) \\
&= x^6 + y^6 + z^6 - (x^4 y^2 +x^2y^4 +x^4z^2+x^2z^4+y^4z^2+ y^2 z^4) + 3 x^2y^2z^2.
\end{align*}
Robinson proves that this form is psd, by writing $(x^2+y^2)R(x,y,z)$ as
a sum of squares. Hilbert's argument shows that $R$ itself is not sos. 
The original eight zeros ho\-mo\-ge\-ni\-ze to $\{(\pm 1, \pm 1,1), (\pm 1, 0, 1), (0, \pm 1,1)\}$
and $R$ itself has two additional zeros ``at infinity'' $(1,\pm 1,0)$.
The article~\cite{concrete} contains further historical discussion on
psd and sos forms.

Choi, Lam and the second author~\cite{CLR} used Robinson's example as a starting point
to analyze all psd even symmetric sextics in $n \geq 3$. The second author~\cite{lostpaper} 
generalized Robinson's example and showed that Hilbert's construction applies, as 
long as no four of the common zeros $\{\rho_j\}$ are on a line, and no seven are on a conic. 
That paper also contains many worked-out examples.

\section{Sparse polynomial systems and sparse resultants}
\label{sec:theory}

In this section, we collect results on sparse systems of polynomial
equations that are necessary for our generalizations of the
Cayley--Bacharach theorem.

A \emph{configuration of lattice points}, or a \emph{configuration} is a 
finite subset $A$ of $\ZZ^d$. 
The \emph{dimension} of $A$, denoted by $\dim(A)$, is the dimension of the
smallest affine subspace of $\RR^d$ containing all points in $A$. 

If $A$ is a configuration, $\conv(A)$
denotes the convex hull of the elements of $A$ in $\RR^d$; $\conv(A)$
is a \emph{convex lattice polytope} (a convex polytope whose vertices have integer
coordinates).

A configuration $A$ is called \emph{saturated} if $A = \conv(A) \cap
\ZZ^d$, that is, if $A$ equals the set of all lattice points in its
convex hull.

If $A\subset \ZZ^d $ is a $d$-dimensional configuration, its \emph{normalized volume}, 
denoted $\vol(A)$,
is the Euclidean volume of $\conv(A)$, renormalized so that the unit
simplex in $\ZZ^d$ has volume one. More explicitly, the $\vol(A)$ is
$d!$ times the Euclidean volume of $\conv(A)$.

A Laurent polynomial is \emph{supported on a configuration
  $A$} if it is of the form 
  $\sum_{u \in A} \lambda_u x^u \in \CC[x_1^{\pm},\dots,x_d^{\pm}]$.

The following result, due to Kouchnirenko~\cite{kouchnirenko},
illustrates the connection between the combinatorics of configurations and systems of
polynomial equations.

\begin{theorem}
\label{thm:Kouchnirenko}
Let $A\subset \ZZ^d$ be a $d$-dimensional configuration, and let
$F_1,\dots,F_d$ be generic Laurent polynomials supported on $A$. Then
the number of common roots of $F_1,\dots,F_d$ (in $(\CC \minus
\{0\})^d := (\CC^*)^d$) is $\vol(A)$.
\end{theorem}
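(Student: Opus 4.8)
The plan is to reduce the statement to the Bernstein--Kushnirenko bound (the BKK theorem) and then show that for $d$ polynomials sharing a \emph{common} Newton polytope $P = \conv(A)$, the mixed volume that appears in the BKK count collapses to $d!$ times the ordinary Euclidean volume of $P$, which is exactly $\vol(A)$. First I would recall Bernstein's theorem: if $G_1,\dots,G_d \in \CC[x_1^{\pm},\dots,x_d^{\pm}]$ have Newton polytopes $P_1,\dots,P_d$, then the number of isolated common zeros in $(\CC^*)^d$, counted with multiplicity, is at most the normalized mixed volume $\mathrm{MV}(P_1,\dots,P_d)$, with equality when the coefficients are generic. Applying this with $P_1 = \cdots = P_d = P = \conv(A)$ gives that the generic root count is $\mathrm{MV}(P,\dots,P)$. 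The second step is the multilinearity and normalization of mixed volume: $\mathrm{MV}$ is symmetric and multilinear in its arguments (with respect to Minkowski sum), and $\mathrm{MV}(P,\dots,P) = d!\,\mathrm{vol}_{\mathrm{Euc}}(P)$, which by our definition of normalized volume is precisely $\vol(A)$. (Here one must be a little careful about normalization conventions: with the convention that $\mathrm{MV}$ of $d$ copies of the standard simplex equals $1$, this identity is exactly the defining relation $\mathrm{vol}(\lambda_1 P_1 + \cdots + \lambda_d P_d) = \sum \mathrm{MV}(P_{i_1},\dots,P_{i_d}) \lambda_{i_1}\cdots\lambda_{i_d}/d!$ specialized to a single polytope.)

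The third point to address is the hypothesis $\dim(A) = d$: if $A$ were contained in a proper affine subspace, $P$ would be lower-dimensional, $\vol(A)$ would be zero, and generically the system would have no solutions in $(\CC^*)^d$ — so the full-dimensionality assumption is exactly what guarantees a nonzero, well-defined count, and it also ensures the mixed volume $\mathrm{MV}(P,\dots,P)$ is positive. I would note this briefly. One subtlety worth flagging: Bernstein's theorem as usually stated bounds the number of \emph{isolated} solutions, and asserts that for generic coefficients every solution is isolated, nondegenerate, and lies in the torus with the expected total count; I would either cite this refined version or remark that the "generic" in the statement of Theorem~\ref{thm:Kouchnirenko} is understood to include the Zariski-open condition under which the BKK bound is attained exactly.

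The main obstacle — if one wants a genuinely self-contained argument rather than a citation-based one — is proving Bernstein's theorem itself, i.e.\ that the generic root count equals the mixed volume. A clean route is a homotopy/degeneration argument: choose generic lifting functions on $A$ to obtain a regular subdivision of $P$ into simplices, use a toric deformation (a one-parameter family $t \mapsto F_i^{(t)}$ whose coefficients are weighted by powers of $t$ according to the lifting) so that as $t \to 0$ the system degenerates into one binomial system per cell of the subdivision, count the torus solutions of each binomial system via Smith normal form (the count for a simplex cell being its normalized volume), and then invoke a conservation-of-number / properness argument to transfer the count back to generic $t$. That degeneration argument is the technical heart; everything else (multilinearity of mixed volume, the normalization identity, handling the low-dimensional case) is essentially bookkeeping. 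In the present paper it is legitimate to simply cite Bernstein--Kushnirenko and reduce to the mixed-volume identity, so the "proof" here is really: BKK plus the elementary fact that $\mathrm{MV}(P,\dots,P) = d!\,\mathrm{vol}(P)$.
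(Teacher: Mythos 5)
The paper does not actually prove Theorem~\ref{thm:Kouchnirenko}; it is stated as a classical result and attributed to Kouchnirenko \cite{kouchnirenko}, with no argument given. Your proposal is therefore doing more than the paper does, and what you propose is correct: deducing the equal-support case from the BKK theorem (Theorem~\ref{thm:BKK}, which the paper also states without proof, citing \cite{bernstein}) via the identity $\mvol(P,\dots,P)=\vol(P)$ is a standard and valid reduction. With the paper's normalization in Definition~\ref{def:mixedVolume} this identity is immediate: setting all $P_i=P$ gives $\mvol(P,\dots,P)=\frac{1}{d!}\sum_{k=1}^{d}(-1)^{d-k}\binom{d}{k}k^{d}\vol(P)=\vol(P)$, using $\sum_{k=0}^{d}(-1)^{d-k}\binom{d}{k}k^{d}=d!$, so your careful remarks about normalization conventions resolve exactly as you anticipate. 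Your observation about the role of the hypothesis $\dim(A)=d$ is also apt. The only caveat is the one you flag yourself: as a self-contained argument this still rests on Bernstein's theorem, for which you give only a sketch (toric degeneration to binomial systems indexed by a regular subdivision, Smith normal form count per cell, conservation of number); that sketch is the right outline but is not a complete proof. Since the paper treats both Theorem~\ref{thm:Kouchnirenko} and Theorem~\ref{thm:BKK} as citable black boxes, your citation-based reduction is entirely adequate for the role the statement plays here, and it has the small additional virtue of exhibiting Kouchnirenko's theorem as the special case $A_1=\cdots=A_d=A$ of the BKK theorem rather than as an independent input.
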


Our next goal is to give the number of solutions to a system of
Laurent polynomial equations, when the supports of the equations are
not necessarily the same. A note on terminology: when we refer to
\emph{sparse systems of equations}, we mean a system of Laurent
polynomial equations whose supports have been fixed. We start by
introducing a generalization of the notion of volume.

\begin{definition}
\label{def:mixedVolume}
If $P\subset \RR^d$ is a convex polytope and $\lambda \in \RR$, let $\lambda P =\{ \lambda
u \mid u \in P\}$. If $P,Q \subset \RR^d$ are polytopes, their
\emph{Minkowski sum} is $P+Q = \{u+v \mid u \in P, v \in Q\}$.
Let $A_1,\dots,A_d$ be configurations, and denote $P_i =
\conv(A_i)$. 
The the \emph{mixed volume} of 
$P_1,\dots, P_d$ is
\[
\mvol(P_1,\dots,P_d):= \frac{1}{d!} 
\sum_{k=1}^d (-1)^{d-k} 
\sum_{1\leq i_1 < \cdots < i_k \leq d}
\vol(P_{i_1}+\cdots+P_{i_k}).
\]
\end{definition}

The following result is known as the Bernstein, Kouchnirenko and
Khovanskii (or BKK) theorem. In this form, it first appeared in ~\cite{bernstein}.

\begin{theorem}
\label{thm:BKK}
Let $A_1,\dots,A_d \subset \ZZ^d$ be configurations, and denote $P_i =
\conv(A_i)$. For $1\leq i \leq d$, let $F_i = \sum_{a \in A_i} \lambda_{i,a} x^a$ be a
Laurent polynomial with support contained in $A_i$. If the
coefficients $\{\lambda_{i,a} \mid a \in A_i, 1\leq i \leq d \}$ are
sufficiently generic, the system of polynomial equations
$F_1(x)=\cdots=F_d(x)=0$ has precisely $\mvol(P_1,\dots,P_d)$
solutions in $(\CC^*)^d$.
\end{theorem}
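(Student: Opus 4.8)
The plan is to deduce the BKK theorem (Theorem~\ref{thm:BKK}) from Kouchnirenko's theorem (Theorem~\ref{thm:Kouchnirenko}) by a homogenization-and-degeneration argument, exploiting the fact that mixed volume is the unique polarization of normalized volume. Concretely, I would first record that $\mvol$ as defined in Definition~\ref{def:mixedVolume} is the multilinear, symmetric function of the polytopes $P_1,\dots,P_d$ whose diagonal recovers volume: $\mvol(P,\dots,P)=\vol(P)$. The defining inclusion--exclusion formula is exactly the standard expression extracting this polarization from the fact that $\vol(t_1 P_1+\cdots+t_d P_d)$ is a homogeneous polynomial of degree $d$ in the nonnegative reals $t_1,\dots,t_d$ (Minkowski's theorem on mixed volumes); the coefficient of $t_1\cdots t_d$ in that polynomial is $d!\,\mvol(P_1,\dots,P_d)$. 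I will also need the elementary monotonicity/additivity properties of mixed volume under Minkowski sums of the $A_i$, and the fact that $\mvol(P_1,\dots,P_d)$ is a nonnegative integer when the $P_i$ are lattice polytopes.

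Next I would set up the enumerative count. Given generic $F_1,\dots,F_d$ with $\supp(F_i)\subseteq A_i$, the common zero locus in $(\CC^*)^d$ is finite; genericity of the coefficients guarantees (via Bertini-type transversality, or directly from the Jacobian criterion applied to the generic system) that all intersection points are simple, so the count is well-defined and independent of the generic choice. The core combinatorial step is to compare this count with the count for a single "merged" support. A clean route: for a large integer $N$, consider the dilated configuration $A := N_1 A_1 + \cdots + N_d A_d$ (Minkowski sum of lattice-point sets) for suitable positive integers $N_i$, so that $\conv(A) = N_1 P_1 + \cdots + N_d P_d$. A generic Laurent polynomial supported on $A$ factors, in the relevant generic limit, in a way reflecting the structure of $N_i$ copies of each $A_i$; tracking how $\vol(N_1 P_1+\cdots+N_d P_d)$ expands as a polynomial in the $N_i$ and matching the coefficient of $N_1\cdots N_d$ against the intersection count isolates $\mvol(P_1,\dots,P_d)$. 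Alternatively — and this is the version I would actually write — I would use a coherent (regular) mixed subdivision of the Minkowski sum $P_1+\cdots+P_d$ induced by generic lifting functions on the $A_i$: the maximal cells of type $(1,1,\dots,1)$ are in bijection, after a toric degeneration of the system $F_1=\cdots=F_d=0$, with the roots in $(\CC^*)^d$, and their total normalized volume is precisely $\mvol(P_1,\dots,P_d)$ by the additivity of mixed volume over the subdivision. Each such cell contributes exactly one root of the degenerate system (a binomial system, solvable explicitly), and a standard semicontinuity/conservation-of-number argument transports the count back to the original generic system.

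I would organize the write-up as: (1) reduce to the case where each $A_i$ generates $\ZZ^d$ as an affine lattice and contains $\0$, by translating and passing to a sublattice, noting mixed volume and root count are both invariant under these reductions (using $\vol$ with respect to the appropriate lattice); (2) introduce generic lifting functions $\omega_i\colon A_i\to\RR$ and form the lower hull of $\{(a,\omega_i(a)):a\in A_i\}$ summed over $i$, giving a coherent mixed subdivision of $P_1+\cdots+P_d$; (3) invoke the additivity of normalized volume over this subdivision together with Definition~\ref{def:mixedVolume}'s formula to identify $\sum_{\text{cells of type }(1,\dots,1)}\vol(\text{cell})=\mvol(P_1,\dots,P_d)$, the cells of mixed type summing to the volumes that get cancelled in the inclusion--exclusion; (4) use the lifted system with a parameter $t$, $F_{i,t}=\sum_{a\in A_i}\lambda_{i,a} t^{\omega_i(a)} x^a$, and show that as $t\to 0$ the system degenerates cell-by-cell, each full-dimensional mixed cell of type $(1,\dots,1)$ yielding one solution in $(\CC^*)^d$ and the other cells yielding none; (5) conclude by a conservation-of-number (properness of the family over a punctured disk, or a Puiseux-series) argument that the generic $t=1$ fiber — i.e.\ a generic original system — has exactly this many roots. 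The main obstacle is step (4): making the toric degeneration rigorous, i.e.\ proving that the limit as $t\to0$ really redistributes the roots among the mixed cells without loss or creation of solutions in the torus. This requires either a careful Newton-polytope/valuation argument controlling where solutions of $F_{i,t}=0$ can accumulate, or citing the properness of the relevant toric morphism; I would handle it by a direct estimate showing that any convergent branch of solutions $x(t)$ has valuations pinned to a vertex of the mixed subdivision, so branches are in bijection with the $(1,\dots,1)$-cells, each contributing the root count of the associated reduced binomial system, which is $1$. Since this theorem is attributed to Bernstein and only being recalled for later use, I expect the paper to state it without a full proof or with a sketch along these lines and a reference to~\cite{bernstein}; accordingly I would keep the exposition at the level of a proof sketch, citing the coherent mixed subdivision machinery rather than redeveloping it.
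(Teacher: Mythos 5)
The paper does not prove this statement at all: Theorem~\ref{thm:BKK} is recorded as the Bernstein--Kouchnirenko--Khovanskii theorem with a citation to \cite{bernstein}, so there is no in-paper argument to compare yours against, and your guess that the authors would simply cite it is correct. The route you sketch --- reduce to a full-dimensional situation, build a coherent mixed subdivision of $P_1+\cdots+P_d$ from generic liftings, degenerate the lifted system $F_{i,t}$ as $t\to 0$ cell by cell, and transport the count back by conservation of number --- is the standard Huber--Sturmfels-style polyhedral proof of Bernstein's theorem, and as a sketch it is the right level of detail for a background result.

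There is, however, one genuine error in the combinatorial bookkeeping. You assert twice that each mixed cell of type $(1,\dots,1)$ contributes exactly one root of the degenerate system. Such a cell is a parallelepiped spanned by edges $e_1,\dots,e_d$ chosen from the respective $P_i$, and the associated binomial system $x^{e_i}=c_i$ has $|\det(e_1,\dots,e_d)|=\vol(\text{cell})$ solutions in $(\CC^*)^d$ (Smith normal form), not one. If each cell contributed a single root, your total would be the \emph{number} of mixed cells, which in general differs from their total normalized volume, and the argument would not return $\mvol(P_1,\dots,P_d)$; the correct statement is that cell $C$ contributes $\vol(C)$ roots and these volumes sum to the mixed volume, which is the other (correct) claim you make in step (3). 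Separately, note that Theorem~\ref{thm:Kouchnirenko} as stated requires all $F_i$ to share a single support $A$, so your opening framing of ``deducing BKK from Kouchnirenko by homogenization'' cannot be carried out literally as a reduction to that statement; the mixed-subdivision argument you actually write is self-contained and should be presented as such, with Kouchnirenko's theorem recovered as the unmixed special case rather than used as input.
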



We now turn to sparse resultants.
While a system of $d$ generic Laurent
polynomial equations in $d$ variables has solutions (see
Theorem~\ref{thm:BKK}), a system of $d+1$ Laurent polynomials in $d$
variables in general does not. The coefficients of the polynomials in
such a system for which solutions exist are determined by a polynomial
called the \emph{resultant}.

As was mentioned in the introduction, resultants can be used to give
an expression for the product (of the coordinates) of the roots of a sparse system.
The earliest versions of this can be found
in~\cite{khovanskii,cds,rojas,pedersen-sturmfels}. In this article we
use the formulas that appear in~\cite{dandrea-sombra}.

Our first task is to introduce resultants in general.

\begin{definition}
\label{def:sparseResultant}
Let $A_0,A_1,\dots,A_d$ be finite subsets of $\ZZ^d$, and let
$\sA=(A_0,\dots,A_d)$. Write $F_i=F_i(\lambda_i,x) = \sum_{u\in A_i} \lambda_{i,u}x^u$
for a Laurent polynomial supported on $A_i$, where
$\lambda_i=(\lambda_{i,u}\mid u \in A_i)$ are variables representing
the coefficients of $F_i$. Set $\lambda=(\lambda_0,\dots,\lambda_d)$.
Let
\[
\Omega_{\sA} = \{ (x,\lambda) \mid F_0(\lambda_0,x)=\cdots =
F_d(\lambda_d,x)=0\} \subset (\CC^*)^d \times \prod_{i=0}^d \PP^{|A_i|-1}.
\]
If the closure of the image of $\Omega_{\sA}$ under the projection 
$(\CC^*)^d \times \prod_{i=0}^d \PP^{|A_i|-1} \to \prod_{i=0}^d
\PP^{|A_i|-1}$ has codimension $1$, then the
\emph{resultant} $\Res_\sA(F_0,\dots,F_d)$ is defined to
be the unique (up to sign) irreducible polynomial in $\ZZ[\lambda]$ which vanishes
on this hypersurface. If this closure has codimension at least $2$,
then we define $\Res_\sA(F_0,\dots,F_d)=1$.
\end{definition}

\begin{example}
\label{ex:changeLattice}
In our definitions, we have used the lattice $\ZZ^d$ as the ambient
lattice without remarking upon it. In general, we may be given
configurations that naturally live in lattices other than $\ZZ^d$, in
which case, we need to change the way we compute resultants accordingly.

For instance, consider $A=\{(0,0,0),(0,1,0),(-1,0,2)\} \subset \ZZ^3$. Then
$A$ generates a lattice $L$ which is isomorphic to $\ZZ^2$. 
If $F_i = \lambda_{i,1} +\lambda_{i,2} y + \lambda_{i,3}x^{-1}z^2$ are
generic Laurent polynomials supported on $A$ for $i=1,2,3$, the system
$F_0(x,y,z)=F_1(x,y,z)=F_2(x,y,z)=0$ has solutions in $(\CC^*)^3$.
On the other hand, we can also consider this as a system of $3$ equations in the two
variables $(s,t)$ induced by the lattice $L$. This new system is
$\lambda_{i,1} +\lambda_{i,2} s + \lambda_{i,3}t = 0$ for
$i=1,2,3$, whose solvability depends on the vanishing of the
determinant of the $3\times 3$ matrix $[\lambda_{i,j}]$.
Using the convention that the ambient lattice is $L$, this determinant
is the resultant of our system.
\end{example}

In order to use the formulas in~\cite{dandrea-sombra}, what is needed
is a power $\Res_\sA(F_0,\dots,F_d)^{\mu_{\sA}}$. 
The exponent $\mu_{\sA}$ can be given a geometric or combinatorial
definition; 
we use the combinatorial definition
of $\mu_\sA$ that can be found in~\cite[Section~2.1]{esterov}.

We need a preliminary result.
If $J \subset \{0,\dots,d\}$, set $A_J = \cup_{j \in J} A_j$. The
following is~\cite[Corollary~1.1]{sturmfelsNewtonResultant}.

\begin{proposition}
\label{prop:essential}
With the notation of Definition~\ref{def:sparseResultant},
$\Res_\sA(F_0,\dots,F_d)\neq 1$ if and only if there exists a unique subset
$J_0 \subset\{0,\dots,d\}$ such that 
\begin{enumerate}
\item $\dim(\conv(A_{J_0})) - |J_0| = -1$, and
\item for $J \subsetneq J_0$, $\dim(\conv(A_{J})) - |J| > -1$.
\end{enumerate}
\end{proposition}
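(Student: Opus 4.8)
The plan is to establish Proposition~\ref{prop:essential} by analyzing the codimension-$1$ condition in Definition~\ref{def:sparseResultant} through the combinatorics of the support sets $A_0,\dots,A_d$. The starting point is the observation that the closure of the image of $\Omega_{\sA}$ under projection to the coefficient space has codimension $\geq 2$ precisely when a generic choice of coefficients $\lambda$ gives a system $F_0=\cdots=F_d=0$ that remains unsolvable in $(\CC^*)^d$ even after perturbing $\lambda$ in a codimension-$1$ family. I would first translate the geometric condition into a statement about \emph{overdetermined subsystems}: the system is ``just barely unsolvable'' (resultant nontrivial) exactly when there is a subset $J$ of the equations whose supports are too degenerate to be solved, while every proper subsystem is not degenerate in this way. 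Concretely, a subsystem indexed by $J \subset \{0,\dots,d\}$ with $|J| = k$ is generically unsolvable when $\dim(\conv(A_J)) < k-1$, i.e., when $\dim(\conv(A_J)) - |J| < -1$ as it involves $k$ equations in the $\dim(\conv(A_J))$-dimensional torus where one expects $\dim(\conv(A_J)) + 1$ generic equations before unsolvability, but the resultant being nontrivial (codimension exactly $1$) pins down the borderline case $\dim(\conv(A_J)) - |J| = -1$.

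The key steps, in order, are: (1) reduce to the torus-orbit/combinatorial picture by recalling that for a subsystem supported on $A_J$, the generic solvability in $(\CC^*)^d$ is governed by $\dim(\conv(A_J))$ via a Bernstein-type count restricted to the affine span of $A_J$; (2) show that if no subset $J_0$ satisfying (1) and (2) exists, then either every subsystem is ``solvable with room to spare,'' forcing the image of $\Omega_{\sA}$ to be dense (so the resultant is $1$), or some subsystem is so degenerate that the image has codimension $\geq 2$ — in either case $\Res_\sA = 1$; (3) conversely, given such a $J_0$, argue that the image of $\Omega_{\sA}$ has codimension exactly $1$ by exhibiting the hypersurface: it is defined by the vanishing of the (honest, possibly iterated) resultant of the subsystem indexed by $J_0$, using the minimality condition (2) to ensure this resultant is a nonconstant irreducible polynomial; and (4) establish \emph{uniqueness} of $J_0$ by a direct combinatorial argument showing that two distinct minimal sets satisfying (1)--(2) would produce two independent codimension-$1$ conditions, contradicting irreducibility of the resultant (which by Definition~\ref{def:sparseResultant} is an irreducible polynomial cutting out a single hypersurface). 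Throughout I would lean on the cited source~\cite{sturmfelsNewtonResultant} for the precise reductions, since this is stated as a quotation of~\cite[Corollary~1.1]{sturmfelsNewtonResultant}.

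The main obstacle I anticipate is step (3) together with step (4): making rigorous the claim that the codimension-$1$ piece of the image is \emph{exactly} cut out by the resultant of the $J_0$-subsystem, and that this forces uniqueness. The subtlety is that the $J_0$-subsystem lives in a lower-dimensional lattice (the affine lattice spanned by $A_{J_0}$, as in Example~\ref{ex:changeLattice}), and one must check that its resultant, computed in that sublattice, really does detect solvability of the full system in $(\CC^*)^d$ — this requires a careful dimension count showing the remaining equations $F_j$ for $j \notin J_0$ impose no further constraints generically, which is where condition (1), $\dim(\conv(A_{J_0})) - |J_0| = -1$, is used in an essential way (it says the $J_0$-subsystem is ``square plus one'' in its own lattice). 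For uniqueness, the cleanest route is probably to show that if $J_0$ and $J_0'$ both satisfied (1)--(2) with $J_0 \neq J_0'$, then $J_0 \cap J_0'$ or $J_0 \cup J_0'$ would violate one of the two conditions via submodularity of $J \mapsto \dim(\conv(A_J))$ (every such rank-like function on unions of finite point sets is submodular), yielding a contradiction; this submodularity estimate is the technical heart and is the step I would write out most carefully.
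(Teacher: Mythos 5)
The paper does not actually prove this proposition: it is quoted directly from \cite[Corollary~1.1]{sturmfelsNewtonResultant}, so there is no in-paper argument to compare yours against, and I can only judge the proposal on its own merits. Your general framing is the right one (the question is the codimension of the closure of the projection of $\Omega_{\sA}$ to $\prod_i\PP^{|A_i|-1}$, and degenerate subsystems are what control it; this is indeed how Sturmfels argues, via a fiber-dimension count for that projection). But the proposal fails at exactly the step you flag as its technical heart.

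The gap is step (4). You propose to \emph{derive} the uniqueness of $J_0$ from conditions (1)--(2) by submodularity of $J\mapsto\dim(\conv(A_J))$. That cannot work, because uniqueness is not a combinatorial consequence of (1)--(2): several distinct subsets can satisfy both conditions at once. For example, take $d=2$ and $A_0=A_1=A_2=\{(0,0),(1,0)\}$. Every two-element subset $J\subset\{0,1,2\}$ has $\dim(\conv(A_J))-|J|=-1$ while its singletons have defect $0$, so there are three pairwise-overlapping subsets satisfying (1)--(2); submodularity is perfectly consistent with this, and here the resultant is $1$ because common solvability of the three equations $a_i+b_ix=0$ is a codimension-$2$ condition on the coefficients. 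Your first instinct in step (4) (``two essential subsets give two independent conditions'') points in the right direction, but there is nothing to \emph{contradict} --- rather, one must prove that two such subsets force $\codim\ge 2$, which requires the geometric fiber-dimension argument of \cite[Theorem~1.1]{sturmfelsNewtonResultant}, not a combinatorial exclusion. Relatedly, the case split in your step (2) is vacuous: since $\dim(\conv(A_{\{0,\dots,d\}}))\le d<d+1$, the full index set always has defect $\le -1$, and any inclusion-minimal subset with defect $\le-1$ automatically has defect exactly $-1$ (drop one index and use monotonicity of $J\mapsto\dim(\conv(A_J))$ to contradict minimality otherwise), so a subset satisfying (1)--(2) \emph{always} exists; moreover $\dim\Omega_{\sA}=\sum_i(|A_i|-1)-1$, so the image is \emph{never} dense. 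Hence the entire content of the proposition is the uniqueness clause --- precisely the part your argument does not establish. Steps (1) and (3) are reasonable sketches of the easier existence direction, but as written the proposal does not prove the statement.
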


\begin{definition}
\label{def:powerForResultant}
Let $\sA$ as in Definition~\ref{def:sparseResultant}. Assuming that
$\Res_\sA(F_0,\dots,F_d)\neq 1$, let $J_0 \subseteq \{0,\dots,d\}$ be as in
Proposition~\ref{prop:essential}, and consider $A_{J_0} \times \{1\}
\subset \ZZ^{d+1}$. Let 
\[
\pi: \RR^{d+1} \to \RR^{d+1}/\Span_\RR(A_{J_0}
\times \{1\})
\]
be the projection, and let $\eta$ be a volume form 
on $\RR^{d+1}/\Span_\RR(A_{J_0} \times \{1\})$ such that the volume of 
$\RR^{d+1}/\big(\Span_\RR(A_{J_0} \times \{1\})+\ZZ^{d+1}\big)$ with
respect to $\eta$ equals $(d+1-|J_0|)!$. We set
\begin{equation}
\label{eqn:powerForResultant}
\mu_\sA := 
    \left| \frac{\Span_\RR(A_{J_0} \times \{1\}) \cap
    \ZZ^{d+1}}{\Span_\ZZ(A_{J_0} \times \{1\})} \right| 
\cdot 
    \mvol_\eta\bigg(\{ \conv(\pi(A_i\times\{1\})) \mid i \in \{0,\dots,d \}
    \smallsetminus J_0\}\bigg)
\end{equation}
where $\mvol_\eta$ denotes the mixed volume with respect to the volume
form $\eta$.
\end{definition}

There is one case where $\mu_\sA$ is easy to compute.

\begin{lemma}
\label{lemma:d=1}
Use the notation of Definitions~\ref{def:sparseResultant} and~\ref{def:powerForResultant}.
If each of the sets $A_0,\dots,A_d$ spans $\ZZ^d$ as a lattice, then $\mu_\sA=1$.
\end{lemma}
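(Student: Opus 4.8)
The plan is to trace through Definition~\ref{def:powerForResultant} under the hypothesis that each $A_i$ spans $\ZZ^d$, and show that every factor in~\eqref{eqn:powerForResultant} collapses. First I would pin down $J_0$: since each $A_i$ spans $\ZZ^d$ as a lattice, we have $\dim(\conv(A_i)) = d$ for every single index $i$, so $\dim(\conv(A_j)) - |J| = d - 1 > -1$ whenever $|J| = 1$; more generally for any $J$ with $|J| \leq d$ one has $\dim(\conv(A_J)) = d$ (adding configurations only enlarges the affine hull), hence $\dim(\conv(A_J)) - |J| = d - |J| \geq 0 > -1$. The only way to get $\dim(\conv(A_{J_0})) - |J_0| = -1$ is therefore $|J_0| = d+1$, i.e. $J_0 = \{0,\dots,d\}$, and condition~(2) of Proposition~\ref{prop:essential} is exactly the computation just made for proper subsets. (Here I am implicitly using that $\Res_\sA \neq 1$ so that $J_0$ exists; if $\Res_\sA = 1$ there is nothing in Definition~\ref{def:powerForResultant} to evaluate, but under the spanning hypothesis Proposition~\ref{prop:essential} shows $J_0 = \{0,\dots,d\}$ does satisfy both conditions, so $\Res_\sA \neq 1$ automatically.)

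Next I would evaluate the two factors of $\mu_\sA$ with this $J_0$. Because $A_{J_0} = \bigcup_i A_i$ spans $\ZZ^d$, the set $A_{J_0} \times \{1\} \subset \ZZ^{d+1}$ spans $\ZZ^{d+1}$: indeed $\Span_\ZZ(A_{J_0}\times\{1\})$ contains, for each lattice generator $e_k$ of $\ZZ^d$ written as an integer combination $\sum c_u u$ with $\sum c_u = s$, the vector $(e_k, s)$, and it contains some $(v,1)$; subtracting suitable multiples recovers $(e_k,0)$ and then $(0,1)$. Consequently $\Span_\RR(A_{J_0}\times\{1\}) = \RR^{d+1}$, so the lattice index
\[
\left| \frac{\Span_\RR(A_{J_0} \times \{1\}) \cap \ZZ^{d+1}}{\Span_\ZZ(A_{J_0} \times \{1\})} \right| = \left|\frac{\ZZ^{d+1}}{\ZZ^{d+1}}\right| = 1.
\]
Moreover the quotient space $\RR^{d+1}/\Span_\RR(A_{J_0}\times\{1\})$ is the zero vector space, the projection $\pi$ is the zero map, and the index set $\{0,\dots,d\}\smallsetminus J_0$ is empty. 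The mixed volume of the empty family of polytopes in the $0$-dimensional space, with respect to the volume form $\eta$ normalized so that the image of $\ZZ^{d+1}$ has volume $(d+1-|J_0|)! = 0! = 1$, is by convention equal to that total volume, namely $1$. Hence $\mu_\sA = 1 \cdot 1 = 1$.

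The only genuinely delicate point is the bookkeeping in the last step: interpreting $\mvol_\eta$ of the empty collection of polytopes in a zero-dimensional vector space as the $\eta$-volume of a point, and checking that the normalization of $\eta$ forces this to be $1$. I would state this explicitly as the base case of the recursive definition of mixed volume (the mixed volume of $k$ polytopes spanning a $k$-dimensional space, here $k=0$, is the normalized volume of a point). Everything else is a direct, essentially formal, unwinding of the definitions once $J_0 = \{0,\dots,d\}$ has been identified, and that identification is the immediate consequence of the dimension count above.
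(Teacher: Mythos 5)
Your proposal follows the same route as the paper: identify $J_0=\{0,\dots,d\}$, show the lattice index is $1$, and observe that the mixed-volume factor is empty. You are in fact more careful than the paper in two places it leaves implicit (the verification via Proposition~\ref{prop:essential} that $J_0$ must be the full index set, and the convention that the empty mixed volume contributes $1$), and those parts are fine.

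The one step that does not work as written is the lattice computation. From $(e_k,s)$ and $(v,1)$, subtracting $s$ copies of $(v,1)$ yields $(e_k-sv,0)$, not $(e_k,0)$, so your argument does not actually produce $\ZZ^d\times\{0\}$ inside $\Span_\ZZ(A_{J_0}\times\{1\})$. The correct (and standard) argument uses differences: for $u,u'\in A_{J_0}$ the vectors $(u,1)-(u',1)=(u-u',0)$ lie in the span, and the hypothesis should be read as saying that these differences generate $\ZZ^d$; together with any single $(u_0,1)$ they then generate all of $\ZZ^{d+1}$, giving index $1$ as the paper asserts. This reading of ``spans $\ZZ^d$ as a lattice'' matters: if one instead interprets it as ``the points of $A_i$ generate $\ZZ^d$ as a $\ZZ$-module,'' as your computation with $e_k=\sum c_u u$ suggests, the conclusion fails --- e.g.\ for $d=1$ and $A_0=A_1=\{-1,1\}$ the points generate $\ZZ$ but $\Span_\ZZ(A_{J_0}\times\{1\})$ has index $2$ in $\ZZ^2$, so $\mu_\sA=2$. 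So the gap is small and entirely repairable, but the repair forces you to pin down the intended hypothesis, which your version of the argument obscures.
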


\begin{proof}
In this case, $J_0=\{0,\dots,d\}$. We have also
$\Span_\RR(A_{J_0} \times \{1\}) \cap
    \ZZ^{d+1}  = \Span_\ZZ(A_{J_0} \times \{1\})$, so that the first
    factor in~\eqref{eqn:powerForResultant} equals $1$, and the second factor
    does not appear, since $\{0,\dots,d\} \smallsetminus J_0=\varnothing$.
\end{proof}

\begin{definition}
Using the notation from Definitions~\ref{def:sparseResultant}
and~\ref{def:powerForResultant}, we set
\begin{equation}
\label{eqn:resultantWithPower}
\mRes_\sA(F_0,\dots,F_d) := \Res_\sA(F_0,\dots,F_d)^{\mu_\sA}.
\end{equation}
\end{definition}

\begin{remark}
\label{remark:notation}
In~\cite{esterov,dandrea-sombra}, the word resultant, and its
corresponding notation, is used for the polynomial
$\mRes_\sA(\lambda)$. In this article, we follow the usual convention
that resultants are irreducible polynomials.
\end{remark}

Our goal is to state a formula from~\cite{dandrea-sombra} that gives
the product of the coordinates of the solutions of a sparse system of
equations in terms of resultants. To do this, we need to introduce
\emph{directional resultants}.

Let $A_1,\dots,A_d$ be finite subsets of $\ZZ^d$, and let
$F_1,\dots,F_d$ be Laurent polynomials such that the support of $F_i
=\sum_{u\in A_i} \lambda_{i,u}x^u$
is (contained in) $A_i$.
Let $v\in \Hom(\ZZ^d,\ZZ) \cong \ZZ^d$. The \emph{weight} of $u \in
\ZZ^d$ with respect to $v$, denoted $v\cdot u$, is the image of $u$
under $v$. 
For $1\leq i \leq d$, let
$A_{i,v}$ be the set of elements of $A_i$ with minimal weight with
respect to $v$, and let $F_{i,v} = \sum_{u \in A_{i,v}}\lambda_{i,u}x^u$
be the restriction of $F_i$ to $A_{i,v}$. Note that $A_{i,v}$ is
contained in a translate of the lattice $v^\perp = \{ u \in \ZZ^d \mid v \cdot u =
0 \} \cong \ZZ^{d-1}$. Choose $\beta_{1,v},\dots,\beta_{d,v}  \in \ZZ^d$ such
that $A_{i,v} - \beta_{i,v} := \{u-\beta_{i,v} \mid u \in A_{i,v} \} \subset
v^\perp$, and let $G_{i,v}  =\sum_{u\in A_i} \lambda_{i,u}x^{u-\beta_{i,v}}$.
We denote

\begin{equation}
\label{eqn:directionalResultant}
\mRes_{\sA,v}(F_{1,v},\dots,F_{d,v}) :=
\mRes_{\{A_{1,v},\dots,A_{d,v}\}}(G_{1,v},\dots,G_{d,v}) 
\end{equation}

In the expression above, the resultant on the right hand side is constructed with respect to
the ambient lattice $v^\perp$ as in Example~\ref{ex:changeLattice}.
We have constructed a polynomial in the coefficients of the $F_{i,v}$,
which is called a \emph{directional resultant}. We note that it is
independent of the choice of $\beta_{1,v},\dots,\beta_{d,v}$.

We are now ready to state the main result of this section, which is a
special case of~\cite[Corollary~1.3]{dandrea-sombra}.

\begin{theorem}
\label{thm:ProductOfRoots}
Let $A_1,\dots,A_d \subset \ZZ^d$ be configurations, and let
$F_1,\dots,F_d$ be Laurent polynomials supported on
$A_1,\dots,A_d$. Denote by $\mathscr{Z}(F_1,\dots,F_d)$ the set of solutions of
$F_1(x)=\cdots=F_d(x)=0$ in $(\CC^*)^d$. If $\rho \in
\mathscr{Z}(F_1,\dots,F_d)$, let $m_\rho$ be its multiplicity as a solution of 
$F_1(x)=\cdots=F_d(x)=0$.
Assume that for all $v \neq 0$, $v \in
\Hom(\ZZ^d,\ZZ)$, we have $\mRes_{\sA,v}(F_{1,v},\dots,F_{d,v})\neq
0$. For $1\leq i \leq d$, we have that
\begin{equation}
\label{eqn:ProductOfRoots}
\prod_{\rho \in \mathscr{Z}(F_1,\dots,F_d)} \rho_i^{m_\rho} = 
\pm \prod \mRes_{\sA,v}(F_{1,v},\dots,F_{d,v})^{v\cdot e_i},
\end{equation}
where the product on the right is over the primitive vectors in $\Hom(\ZZ^d,\ZZ)$,
and $e_1,\dots,e_d$ are the standard unit vectors in $\ZZ^d$.
\end{theorem}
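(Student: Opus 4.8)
The plan is to obtain this statement by specializing \cite[Corollary~1.3]{dandrea-sombra}, which gives, for any $b\in\ZZ^d$, a formula for the product $\prod_{\rho}\rho^b$ (the product, over $\rho\in\mathscr{Z}(F_1,\dots,F_d)$ counted with its multiplicity $m_\rho$, of the Laurent monomial $x^b$ evaluated at $\rho$) as a product of directional resultants, the factor attached to a primitive direction $v$ being raised to the power $v\cdot b$. Our Theorem~\ref{thm:ProductOfRoots} is the case $b=e_i$, where $\rho^b=\rho_i$. So the real content of the argument is not new mathematics but the reconciliation of two bookkeeping frameworks: the one used in \cite{dandrea-sombra} (and, for the power $\mu_\sA$, in \cite{esterov}) and the one assembled in Section~\ref{sec:theory}.

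I would proceed as follows. First, fix the dictionary. By Remark~\ref{remark:notation}, what \cite{dandrea-sombra} call the resultant is our $\mRes=\Res^{\mu_\sA}$, and the directional object they attach to a nonzero $v\in\Hom(\ZZ^d,\ZZ)$ is our $\mRes_{\sA,v}(F_{1,v},\dots,F_{d,v})$, built from the truncations $F_{i,v}$ on the sublattice $v^\perp\cong\ZZ^{d-1}$ as in Example~\ref{ex:changeLattice} (and, as noted in Section~\ref{sec:theory}, independent of the auxiliary translations $\beta_{i,v}$). One must check that the combinatorial normalization $\mu_\sA$ of \eqref{eqn:powerForResultant} --- including the volume form $\eta$ and the lattice-index prefactor --- is the one implicit in the formula of \cite{dandrea-sombra}, and that the ambient lattice used to normalize each directional resultant matches theirs; since Section~\ref{sec:theory} was written so as to agree with \cite{esterov,dandrea-sombra}, this is routine. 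Second, match hypotheses: the assumption that $\mRes_{\sA,v}(F_{1,v},\dots,F_{d,v})\neq 0$ for every nonzero $v$ is exactly the nondegeneracy condition under which \cite[Corollary~1.3]{dandrea-sombra} applies; it guarantees in particular that $\mathscr{Z}(F_1,\dots,F_d)$ is finite and that the left-hand side of \eqref{eqn:ProductOfRoots} is well defined. Third, specialize $b=e_i$ and read off \eqref{eqn:ProductOfRoots}, noting that the product over primitive $v$ is genuinely finite and well posed: if $c\in\ZZ_{>0}$ then $F_{i,cv}=F_{i,v}$ for every $i$ while $(cv)\cdot e_i=c\,(v\cdot e_i)$, so one should range only over primitive $v$, and for all but finitely many of these each $F_{i,v}$ is a monomial, whence $\mRes_{\sA,v}=1$. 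The sign ambiguity is carried through as in the cited statement.

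The step I expect to be the only real obstacle is this reconciliation of normalizations: concretely, verifying that our $\mRes_{\sA,v}$ equals, up to the allowed sign, precisely the factor occurring in the product formula of \cite{dandrea-sombra}, and that their indexing set of directions coincides with the set of primitive vectors in $\Hom(\ZZ^d,\ZZ)$. This needs care because $\mu_\sA$ and the volume-form conventions enter both in the global resultant and in each directional resultant, but it involves no genuinely new geometric argument --- the Poisson product formula underlying \cite[Corollary~1.3]{dandrea-sombra} does all the real work.
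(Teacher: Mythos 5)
Your proposal matches the paper exactly: the paper offers no independent proof of Theorem~\ref{thm:ProductOfRoots}, stating only that it is a special case of \cite[Corollary~1.3]{dandrea-sombra}, with Section~\ref{sec:theory} set up precisely so that the notation and normalizations ($\mu_\sA$, the directional resultants, the ambient lattice $v^\perp$) agree with \cite{esterov,dandrea-sombra}. Your specialization to $b=e_i$ and the bookkeeping reconciliation is the intended argument.
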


We note that by Proposition~\ref{prop:essential}, if $v$ is not an
inner normal of a codimension $1$ face of
$\conv(A_1)+\cdots+\conv(A_d)$, then 
$\mRes_{\sA,v}(F_{1,v},\dots,F_{d,v}) = 1$. This implies that the
product on the right hand side of~\eqref{eqn:ProductOfRoots} has
finitely many factors different from $1$.

The $\pm$ sign in~\eqref{eqn:ProductOfRoots} is necessary, since
resultants are only defined up to sign.

\begin{remark}
The assumption in Theorem~\ref{thm:ProductOfRoots}, that the
directional resultants do not vanish, is a genericity assumption. It
states that we are working with a system of Laurent polynomial
equations such that none of the facial subsystems have a common root.
\end{remark}

\section{Chasles Configurations and Structures}
\label{sec:generalization}

In this section, we give combinatorial generalizations for the
Cayley--Bacharach theorem.

\begin{definition}
\label{def:ChaslesConfiguration}
Let $A\subset \ZZ^d$ be a $d$-dimensional configuration, and
write $|A|$ for the cardinality of $A$. Then $A$ is a 
\emph{Chasles configuration} if $|A| + 1 = \vol(A)+d$. 
A Chasles configuration $A \subset \ZZ^d$ is \emph{saturated} if
$A=\conv(A) \cap \ZZ^d$.
\end{definition}

If $A$ is a Chasles configuration, let $N=\vol(A)-1$, so that $|A|=N+d$.
Fix $N$ generic points in $\CC^d$. The Laurent polynomials supported
on $A$ that vanish on these $N$ points form a $d$-dimensional vector
space. If $F_1,\dots,F_d$ is a basis for this vector space, then by
Theorem~\ref{thm:Kouchnirenko}, the number of common zeros of
$F_1,\dots,F_d$ is $N+1$. This determines a map $\Upsilon_A$ from an
open subset of $(\CC^d)^N$ to $\CC^d$.

\begin{definition}
\label{def:ChaslesStructure}
More generally, a \emph{Chasles structure} consists of: two positive
integers $N$ and $d$, a partition $d=k_1+\cdots+k_\ell$, and
configurations $A_1,\dots,A_\ell \subseteq \ZZ^d$ such that
$|A_i|=N+k_i$, and $\mvol(\underbrace{A_1,\dots,A_1}_{k_1\text{
    times}},\underbrace{A_2,\dots,A_2}_{k_2\text{
    times}},\dots,\underbrace{A_\ell,\dots,A_\ell}_{k_\ell \text{
    times}})=N+1$. 
We denote $\sA = \{A_1,\dots,A_\ell\}$. We sometimes
abuse notation and call $\sA$ a Chasles structure.

Note that Chasles configuration is a Chasles structure for 
$N=|A|-\dim(A)$, $d=\dim(A)$, with partition $d=k_1$. 
\end{definition}

We now come to the main result in this article.

\begin{theorem}
\label{thm:rationalMap}
A Chasles structure $\sA$ induces a rational map  $\Upsilon_{\sA} : [(\CC^*)^d]^N \dashrightarrow (\CC^*)^d$
\end{theorem}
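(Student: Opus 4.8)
## Proof proposal

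The plan is to mimic the proof of Theorem~\ref{thm:octamapIsRational}, replacing Sylvester resultants by the sparse-resultant machinery of Section~\ref{sec:theory}. Given a Chasles structure $\sA=\{A_1,\dots,A_\ell\}$ with partition $d=k_1+\cdots+k_\ell$, fix $N$ generic points $\rho_1,\dots,\rho_N \in (\CC^*)^d$. For each $i$, the Laurent polynomials supported on $A_i$ vanishing at these $N$ points form a linear system; since $|A_i|=N+k_i$ and the points are generic, this system has solution space of dimension exactly $k_i$, and we may choose a basis $F_{i,1},\dots,F_{i,k_i}$. By Cramer's rule, after fixing a suitable monomial normalization (singling out $k_i$ of the coefficients to be a standard basis of solutions, exactly as the coefficients of $x^3$ and $y^3$ were normalized to $1$ in Theorem~\ref{thm:octamapIsRational}), each remaining coefficient of each $F_{i,j}$ is a rational function of the coordinates of $\rho_1,\dots,\rho_N$; genericity guarantees the relevant determinants are nonzero. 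Collecting all $d$ polynomials $F_{i,j}$ ($1\le i\le \ell$, $1\le j\le k_i$) gives a sparse system whose mixed volume, by the Chasles-structure hypothesis, is $N+1$ (this is exactly the BKK count, Theorem~\ref{thm:BKK}).

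Next I would apply Theorem~\ref{thm:ProductOfRoots} to this system of $d$ Laurent polynomials in $d$ variables. That theorem expresses, for each coordinate index $m$, the product $\prod_{\rho\in\mathscr Z}\rho_m^{m_\rho}$ over the $N+1$ common solutions (with multiplicity) as $\pm\prod_v \mRes_{\sA,v}(\cdots)^{v\cdot e_m}$, a finite product of directional resultants raised to integer powers. Each directional resultant is a polynomial in the coefficients of the $F_{i,j}$, hence — composing with the Cramer-rule expressions — a rational function of the coordinates of $\rho_1,\dots,\rho_N$. For generic points, all multiplicities $m_\rho$ equal $1$, so the left-hand side is the honest product of the $m$-th coordinates of all $N+1$ solutions. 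Dividing out the known contribution $\prod_{j=1}^N(\rho_j)_m$ from the $N$ input points (nonzero since the points lie in $(\CC^*)^d$) isolates the $m$-th coordinate of the extra point. Doing this for $m=1,\dots,d$ produces $\Upsilon_{\sA}$ as a $d$-tuple of rational functions, and lands in $(\CC^*)^d$ because the extra point is, generically, a torus point (its coordinates are ratios of nonvanishing resultants).

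The main obstacle is verifying that the genericity hypotheses of Theorem~\ref{thm:ProductOfRoots} are actually met by the systems arising from a Chasles structure — that is, that for \emph{generic} choices of the $N$ points, every directional (facial) resultant $\mRes_{\sA,v}(F_{1,v},\dots,F_{d,v})$ is nonzero. This is subtle precisely because the coefficients of the $F_{i,j}$ are \emph{not} free parameters: they are constrained to satisfy the vanishing conditions at $\rho_1,\dots,\rho_N$. I would argue that the map sending point configurations to coefficient vectors is dominant onto the parameter space cut out by those linear conditions, and that on each relevant facial stratum the non-vanishing of the directional resultant is an open dense condition; since a finite intersection of open dense sets is open dense, the claim follows for generic points. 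One also needs to check that the $N$ input points themselves do not already account for all $N+1$ solutions with a collision (so that the "extra point" is genuinely distinct and finite) — again an open dense condition, handled the same way. A secondary bookkeeping point is that some $A_i$ may fail to span $\ZZ^d$, so Lemma~\ref{lemma:d=1} does not directly give $\mu_{\sA}=1$; but this only affects the integer exponents $\mu$ appearing inside $\mRes$, and does not disturb rationality, since $\mRes = \Res^{\mu}$ is still a polynomial in the coefficients.
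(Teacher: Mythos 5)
Your proposal is correct and follows essentially the same route as the paper's own proof: express the coefficients of the $d$ Laurent polynomials as rational functions of the point coordinates via linear algebra, invoke Theorem~\ref{thm:BKK} to get $N+1$ common zeros, apply Theorem~\ref{thm:ProductOfRoots} to write the product of $m$-th coordinates as a rational function of those coefficients, and divide out the contribution of the $N$ given points. Your extra discussion of why the genericity hypotheses (nonvanishing directional resultants, multiplicity one, distinctness of the extra point) hold for generic input points is more explicit than the paper, which simply absorbs these into the word ``general,'' but it does not change the argument.
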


\begin{proof}
A Chasles structure is set up so that, if we fix $N$ general points
$\rho_1,\dots,\rho_N \in (\CC^*)^d$,  then for each $i=1,\dots,\ell$, those points 
determine a $k_i$-dimensional vector space of Laurent polynomials supported on
$A_i$ that vanish on them. Picking a basis of each
vector space, we obtain $k_1+\cdots+k_\ell=d$ polynomials
$F_1,\dots,F_d$, whose coefficients can be expressed as rational
functions on the coordinates of $\rho_1,\dots,\rho_N$. 

Since the mixed volume of the corresponding Newton polytopes equals
$N+1$, the Laurent polynomials $F_1,\dots,F_d$ have $N+1$ common zeros
in $(\CC^*)^d$ by Theorem~\ref{thm:BKK}. Let $\rho_{N+1}$ be the point
determined in this way. Note that the genericity assumption on
$\rho_1,\dots,\rho_N$ implies that $\rho_{N+1} \notin \{\rho_1,\dots,\rho_N\}$.

On the other hand, for fixed $1\leq i \leq d$, the product of the
$i$th coordinates of $\rho_1,\dots,\rho_{N+1}$ can be expressed as a
rational function on the coefficients of $F_1,\dots,F_d$ via~\eqref{eqn:ProductOfRoots}.

It follows that the $i$th coordinate of $\rho_{N+1}$ is a rational
function of the coordinates of $\rho_1,\dots,\rho_{N}$.
\end{proof}

We emphasize that, because of the explicit nature
of~\eqref{eqn:ProductOfRoots}, the proof of
Theorem~\ref{thm:rationalMap} can be used to provide 
an explicit expression for the map $\Upsilon_\sA$. This is illustrated
in examples in the following section.

\section{Examples}
\label{sec:examples}

\subsection{Example: The Cayley--Bacharach Theorem}

In this case $A$ is the set of lattice points in the triangle in $\RR^2$ with
vertices $(0,0), (3,0),(0,3)$ (affine or inhomogeneous version) or the set
of lattice points in the triangle in $\RR^3$ with vertices $(3,0,0),
(0,3,0), (0,0,3)$. In either case, $A$ consists of $10$ points,
$\dim(A)=2$ and $\vol(A)=9$, so $A$ is a Chasles configuration.

\subsection{Example: Triangle with one interior point}

Let $A=\{(0,0),(1,1),(2,1),(1,2)\}$. This is a saturated Chasles
configuration, with $\vol(A)=3$. Given $2$ generic points $\rho_1=(a_1,b_1)$
and $\rho_2=(a_2,b_2)$, which determine a two-dimensional space of
polynomials supported on $A$ that vanish on $\rho_1,\rho_2$. Denote $(a_3,b_3)$
their third common zero.
We pick the
following basis of this vector space:
\begin{align*}
F & =
a_1a_2b_1b_2(a_1b_2-a_2b_1)+(a_1b_1^2-a_2b_2^2)x^2y+(a_2^2b_2-a_1^2b_1)xy^2, \\ 
G & = 
(b_1-b_2)x^2y+(a_2-a_1)xy^2+(a_1b_2-a_2b_1)xy.
\end{align*}
In this case, the directional resultants are $2\times 2$ determinants
of the coefficients of $F$ and $G$ corresponding to the facets of
$\conv(A)$. The formula~\eqref{eqn:ProductOfRoots} yields
\[
a_1a_2a_3 = \pm
\frac{a_1a_2b_1b_2(a_1-a_2)^2}{(b_1-b_2)(a_1b_1-a_2b_2)}, \qquad
b_1b_2b_3 = \pm
\frac{a_1a_2b_1b_2(b_1-b_2)^2}{(a_1-a_2)(a_1b_1-a_2b_2)}.
\]
Checking the signs, we obtain that
\[
a_3 = - \frac{b_1b_2(a_1-a_2)^2}{(b_1-b_2)(a_1b_1-a_2b_2)}, \qquad
b_3 = - \frac{a_1a_2(b_1-b_2)^2}{(a_1-a_2)(a_1b_1-a_2b_2)}.
\]

Note that $(a_i,b_i)$ $i=1,2,3$ are collinear. To see this note that,
since we are working over $(\CC^*)^2$, the system $F=G=0$ is
equivalent to the sytem $F=\frac{1}{xy}G=0$, and $\frac{1}{xy}G=0$ is
the equation of the line through $(a_1,b_1)$ and $(a_2,b_2)$.

\subsection{Saturated Planar Chasles configurations}

In this section, we show that there are only finitely many isomorphism
classes of saturated
Chasles configurations of dimension two. We note that there are
infinitely $2$-dimensional Chasles structures
involving two different saturated configurations (see Section~\ref{subsec:infChaslesStructure}).

Recall that a lattice polytope is \emph{reflexive} if its polar polytope is
also a lattice polytope.
A lattice polygon is reflexive if and only if
it contains a unique interior lattice point, but this is not
sufficient in higher dimensions. It follows from~\cite{scott,hensley}, 
that the number of equivalence classes (up to translations and
$\textrm{GL}_n(\mathbb{Z})$) of reflexive polytopes is finite. 
In the case of dimension $2$, the number of equivalence classes is
well known to be sixteen; there is an algorithm for computing all such
equivalence classes~\cite{ks1}, which yields $4,319$ equivalence classes
in dimension three~\cite{ks2}, and $473,800,776$ equivalence classes in dimension four~\cite{ks3}.

\begin{proposition}
The saturated Chasles configurations of dimension $2$ correspond to
reflexive polygons. Consequently there are only sixteen isomorphism
classes of saturated Chasles configurations of dimension $2$.
\end{proposition}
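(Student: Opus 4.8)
The plan is to translate the Chasles condition into a statement about the Ehrhart data of the polygon and then invoke the classification of reflexive polygons. Let $A \subset \ZZ^2$ be a saturated $2$-dimensional configuration, so $|A| = \#(\conv(A) \cap \ZZ^2)$; write $P = \conv(A)$. By Pick's theorem, if $I$ denotes the number of interior lattice points of $P$ and $B$ the number of boundary lattice points, then the Euclidean area of $P$ is $I + B/2 - 1$, hence the normalized volume is $\vol(A) = 2I + B - 2$. Also $|A| = I + B$. The Chasles condition $|A| + 1 = \vol(A) + d$ with $d = 2$ reads $I + B + 1 = 2I + B - 2 + 2$, i.e. $I + B + 1 = 2I + B$, which simplifies to $I = 1$. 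So $A$ is a saturated planar Chasles configuration if and only if $P$ has exactly one interior lattice point.

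Next I would record the standard fact, cited in the excerpt just before the proposition, that a lattice polygon is reflexive if and only if it has a unique interior lattice point. Therefore the saturated planar Chasles configurations are exactly the lattice-point sets of reflexive polygons, and two such configurations are isomorphic (under translation and $\mathrm{GL}_2(\ZZ)$) precisely when the corresponding polygons are equivalent in the sense used for the reflexive classification. Since the number of equivalence classes of reflexive polygons is sixteen, as recalled from \cite{ks1}, there are exactly sixteen isomorphism classes of saturated Chasles configurations of dimension $2$.

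The only step requiring any care is the Pick's-theorem computation and making sure the normalization conventions match: the paper defines $\vol(A)$ as $2$ times the Euclidean area in dimension $2$ (so that the unit simplex has volume $1$), and one must confirm that this gives $\vol(A) = 2\,\mathrm{Area}(P) = 2(I + B/2 - 1) = 2I + B - 2$. I expect this bookkeeping to be the main (and only real) obstacle; everything else is a direct appeal to Pick's theorem and to the cited classification of reflexive polygons. One should also note in passing that for a saturated configuration the interior-point condition is the whole story — there is no subtlety about extra lattice points, precisely because saturation forces $A$ to be all of $P \cap \ZZ^2$.
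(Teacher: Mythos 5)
Your proposal is correct and follows essentially the same route as the paper: both apply Pick's theorem to write $\vol(A)=2I+B-2$, combine with the Chasles condition and saturation ($|A|=I+B$) to deduce $I=1$, and then invoke the characterization of reflexive polygons by a unique interior lattice point together with the sixteen-class classification. Your write-up is in fact slightly more explicit than the paper's, which leaves the $|A|=I+B$ step and the final appeal to reflexivity implicit.
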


\begin{proof}

If $\dim(A)=2$, let ${\rm Int}(A)$ denote the number of lattice points
in the interior of $\conv(A)$, and ${\rm Bd}(A)$ the number of
lattice points on the boundary of $\conv(A)$.

By Pick's formula, the normalized volume of $A$ equals $\vol(A)=2\ {\rm
  Int}(A) + {\rm Bd}(A) - 2$. Combined with the Chasles condition $|A|
+ 1 = \vol(A)+\dim(A)$, we see that ${\rm Int}(A) = 1$.
\end{proof}

\subsection{Example: Cayley Octads}
\label{sec:cayley}

For $d=3$ we consider 
\[
A = \{(0,0,0), (1,0,0), (0,1,0), (0,0,1), (2,0,0), (0,2,0), (0,0,2), 
(1,1,0), (1,0,1), (0,1,1) \},
\] 
the configuration of all lattice points
in twice the standard tetrahedron. Then $A$ is a saturated Chasles
configuration, as $\vol(A)+d = 8+3 = 11 = |A| +1$. 

From a geometric point of view, a polynomial supported on $A$ gives a
quadratic surface, and three of these intersect in eight points
generically. Such configurations of eight points are known
as \emph{Cayley octads}. 

In this case, elegant and compact formulas for the map $\Upsilon_A$ can be found
in~\cite[Proposition~7.1]{octads}. We are grateful to Bernd Sturmfels,
who directed us to this example.

\subsection{Example: A saturated Chasles configuration in dimension
  $d\geq 3$}

For $d\geq 3$ define the configuration 
$A_d = \{ 0, e_1, e_2, e_1+e_2,e_3,2e_3,e_4,\dots,e_d\}\subset \ZZ^d$,
where $e_1,\dots,e_d$ are the coordinate unit vectors.  
The polytope $\conv(A_3)$ is drawn in Figure~\ref{fig:A3}.

Note that the only lattice points in $\conv(A_d)$ belong to $A_d$,
so that $\conv(A_d)$ contains $d+3$ lattice points.
Since $\vol(A_d) = 4$, it follows that $A_d$ is a saturated Chasles
configuration.

\begin{figure}
\begin{center}
\includegraphics[width=.75in]{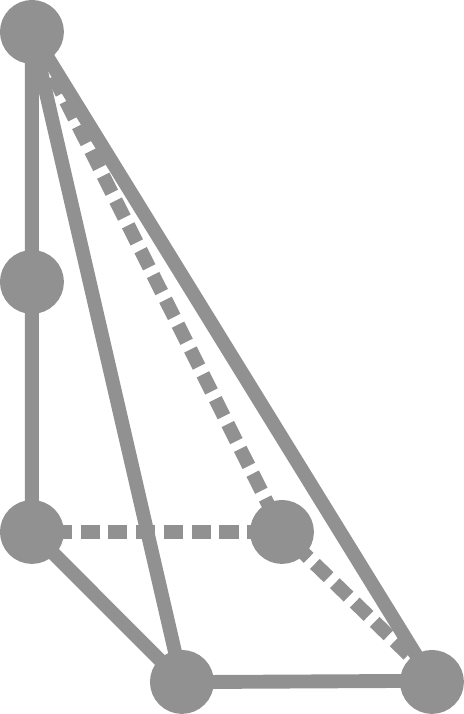}
\end{center}
\caption{The polytope $\conv(A_3)$}
\label{fig:A3}
\end{figure}

The formulas for the extra point map are too large to be displayed directly,
even for $d=3$. For instance, one of the directional resultants
involved is the resultant of three polynomials supported on the unit
square with vertices $\{(0,0),(1,0),(0,1),(1,1)\}$. This is a
polynomial of degree $6$, with $66$ terms, in the $12$ coefficients of the
corresponding polynomials. And this is without taking into account that
those coefficients are themselves rational
functions of the coordinates of the given generic points.

\subsection{Example: Infinite Family of Chasles pairs}
\label{subsec:infChaslesStructure}

Here we produce an infinite family of Chasles structures in the plane.

We let $P_n$ be the quadrangle with vertices $(0,0)$, $(0,n)$, $(1,n+1)$ and
$(1,1)$. We let $Q_n$ be the quadrangle with vertices $(1,0)$, $(0,1)$, $(0,n+1)$ and
$(1,n)$.

$P_n$ and $Q_n$ are reflections of each other, and contain $2n+2$ lattice
points each. Both $P_n$ and $Q_n$ have normalized area $2n$. The Minkowski sum $P_n+Q_n$ is a
hexagon with vertices  
$(1,0)$, $(2,1)$, $(2,2n+1)$, $(1,2n+2)$, $(0,2n+1)$, $(0,1)$, and
normalized area $4(2n+1)$. This is illustrated in Figure~\ref{fig:pairs}.

For the mixed volume, we see that
\[
\mvol(P_n+Q_n) = \frac{1}{2} \big( \vol(P_n+Q_n)-\vol(P_n)-\vol(Q_n)
\big) = \frac{4(2n+1)-2n-2n}{2}  = 2n+2.
\]

The polygons $P_n$ and $Q_n$ thus correspond to a Chasles structure where $d=2$, $k_1=k_2=1$ and $N=2n+1$.
In other words, if we fix $2n+1$ generic points in $\CC^2$, they determine a curve
whose defining polynomial has Newton polytope $P_n$, another curve whose
defining polynomial has Newton polytope $Q_n$, and those two curves meet in $2n+2$
points. 

\begin{figure}
\begin{center}
\includegraphics[width=.365in]{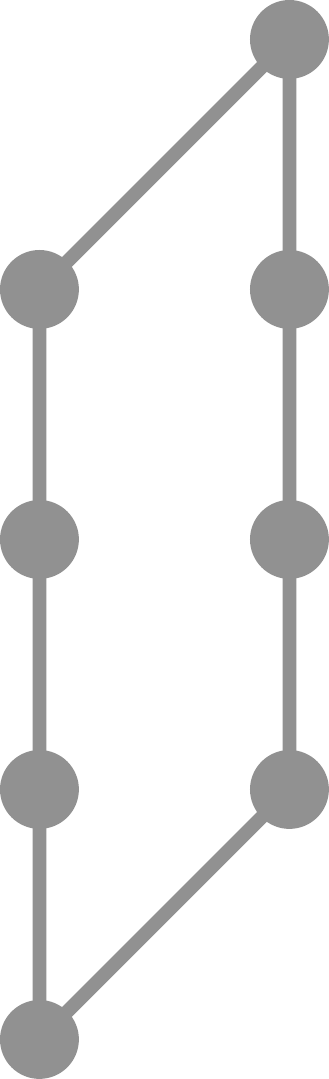}
\hspace{0.3in}
\includegraphics[width=.365in]{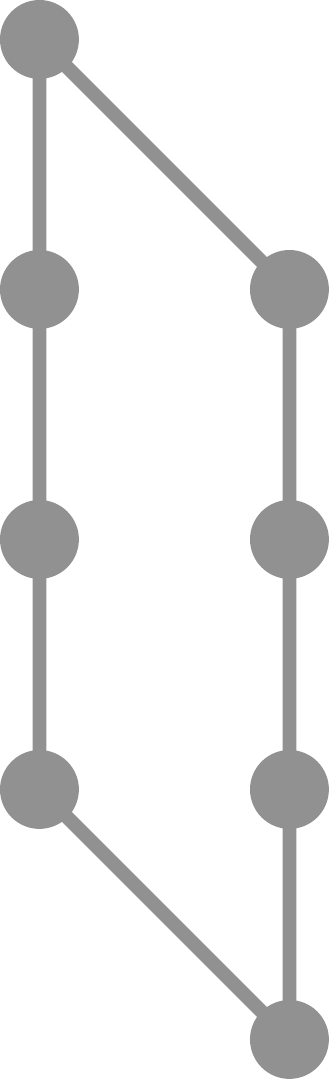}
\hspace{0.3in}
\includegraphics[width=.65in]{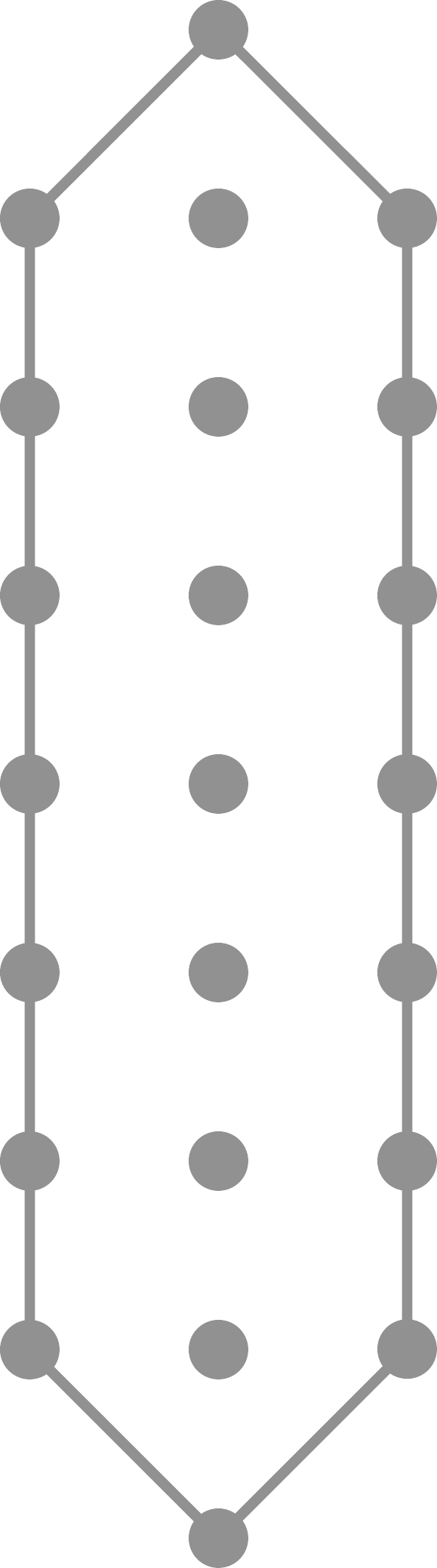}
\end{center}
\caption{The polygons $P_3, Q_3$ and $P_3+Q_3$.}
\label{fig:pairs}
\end{figure}

Since the Minkowski sum $P_n+Q_n$ is a hexagon, there are only six
directional resultants appearing
in~\eqref{eqn:ProductOfRoots}. Choosing the inner normal vectors
$(1,0)$ or $(-1,0)$, the corresponding directional resultant is the classical resultant of two
univariate polynomials of degree $n$. The other four inner normal
vectors yield directional resultants that are monomials.

\subsection{Example: Non-Chasles configuration and non rational extra point map}

Let $A$ be the set of lattice points in the triangle in $\RR^2$ with
vertices $(0,0), (1,2),(3,1)$, with two interior points $(1,1),(2,1)$, so that
$A$ has 5 points, and $\vol(A) = 5$;  $A$ is not a Chasles configuration because three
zeros, in general, induce two more zeros.
Our goal here is to show that the map that assigns the two new zeros
to the original three is not rational. It suffices to fix two of the original zeros and make the third variable,
and show that the coordinates of the new points involve square roots of the coordinates
of the third.

Let us specify that our three points are $(1,1)$, $(2,4)$ and
$(t,t^2)$, $t \neq 0, 1, 2$. There is a two dimensional family of
polynomials supported on $A$ that vanish on these three points. The
following is a choice of basis for this vector space.

\begin{align*}
F_1(x,y) &= x y^2 - x^3 y = x y(y - x^2);\\
F_2(x,y) &= - 8t^3 + (8 + 12 t + 14 t^2 + 15 t^3)x y  \\ &\qquad - (12 + 18 t + 21 t^2 + 7 t^3) x^2 y
+ (4 + 6 t + 7 t^2) x^3 y.
\end{align*}
To find additional common zeros, we note that $F_2(x_0,y_0) \neq 0$ for $x_0y_0 = 0$, and
so any zero also lies on $y_0 = x_0^2$. A computation shows that 
\[
F_2(x_0,x_0^2) = (x_0-1)(x_0-2)(x_0-t) q(x_0); \text{ where }
q(x_0) = (4 + 6t + 7t^2)x_0^2 + (4t + 6t^2)x_0 + 4t^2.
\]
Since the discriminant of $q$ is $-4t^2(12 + 12t + 19t^2)$, which is not a square, the 
values for $x_0$ involves square roots of polynomials involving $t$
and hence  is not a rational function of $\{1,1,2,4,t,t^2\}$.

\raggedbottom
\providecommand{\bysame}{\leavevmode\hbox to3em{\hrulefill}\thinspace}
\providecommand{\MR}{\relax\ifhmode\unskip\space\fi MR }
\providecommand{\MRhref}[2]{%
  \href{http://www.ams.org/mathscinet-getitem?mr=#1}{#2}
}
\providecommand{\href}[2]{#2}

\end{document}